\newtheorem{theorem}{Theorem}[section]
\newtheorem{lemma}[theorem]{Lemma}
\newtheorem{proposition}[theorem]{Proposition}
\newtheorem{corollary}[theorem]{Corollary}
\theoremstyle{definition}
\newtheorem{definition}[theorem]{Definition}
\newtheorem{example}[theorem]{Example}
\newtheorem*{theorem*}{Theorem}
\newtheorem*{remark*}{Remark}
\newtheorem*{remarks*}{Remarks}
\newtheorem*{definition*}{Definition}
\newtheorem{question}[theorem]{Question}
\newtheorem{defn}[theorem]{Definition}
\newtheorem{thm}[theorem]{Theorem}
\newtheorem{cor}[theorem]{Corollary}
\newtheorem{prop}[theorem]{Proposition}
\newtheoremstyle{named}{}{}{\itshape}{}{\bfseries}{.}{.5em}{\thmnote{#3}#1}
\theoremstyle{named}
\newtheorem*{namedtheorem}{}
\newcommand{\OO}{\mathcal{O}}    
\newcommand{\ZZ}{\mathbb{Z}}     
\newcommand{\PP}{\mathbb{P}}      
\newcommand{\QQ}{\mathbb{Q}}      
\newcommand{\CC}{\mathbb{C}}      
\newcommand{\pp}{\mathfrak{p}}   
\newcommand{\PGL}{\mathrm{PGL}}
\DeclareMathOperator{\ord}{ord}
\newcommand{\Hawaii}{Hawai\kern.05em`\kern.05em\relax i}
\begin{document}

\date{\today}
\title[Some Applications of Dynamical Belyi Polynomials]{Some Applications of Dynamical Belyi Polynomials}
\author{Jacqueline Anderson}
\author{Michelle Manes}
\author{Bella Tobin}

\address{Department of Mathematics, 
Bridgewater State University, 
Bridgewater, MA 02325,
ORCiD: 0000-0002-4732-4251}

\address{American Institute of Mathematics, 
1200 E California Blvd,
Caltech 8-32,
Pasadena, CA 91125,
ORCiD: 0000-0001-6843-9157}

\address{Department of Mathematics, Agnes Scott College, 141 E College Ave., Decatur, GA 30030, ORCiD: 0000-0002-3207-8537}
\email{jacqueline.anderson@bridgew.edu}
\email{mmanes@aimath.org}
\email{btobin@agnesscott.edu}

\keywords{Belyi maps, conservative polynomials, post-critically finite (PCF), good reduction, bad reduction, polynomial dynamics}
\subjclass{37P05, 37P15, 37P30}

\maketitle

 \begin{abstract}
We give necessary and sufficient conditions for post-critically finite polynomials to have persistent bad reduction at a given prime. We also answer in the negative a pair of questions posed by Silverman about conservative polynomials. Our proofs rely on conservative dynamical Belyi polynomials as exemplars of PCF (resp. conservative) maps. 
\end{abstract}

 New questions and conjectures in the field of arithmetic dynamics are often drawn from a motivating analogy between 
 objects in arithmetic geometry and objects in dynamical systems. For example, torsion points on abelian varieties parallel periodic points for an iterated morphism of projective space $\PP^n$, and rational points on an elliptic curve parallel rational points in orbits of a morphism of $\PP^1$. 
  
 An abelian variety has complex multiplication (CM) if it has a larger-than-expected endomorphism ring; that is, if it has additional symmetries beyond what one would expect. The additional structure of CM abelian varieties along with properties arising from this structure leads them to serve as valuable test objects in arithmetic geometry.  For example, we know that a CM abelian variety $A/\CC$ is in fact algebraic, i.e., it is  defined over some number field $K$. Moreover, there exists a finite extension $L/K$ such that the abelian variety, when base-changed to $L$, has everywhere good reduction. Roughly speaking, arithmetic objects have ``good reduction'' if they are well-behaved when the object is considered modulo a prime, and they have ``everywhere good reduction'' if this holds for every prime.

 A morphism $f: \PP^1 \to \PP^1$ is {\bf post-critically finite} (PCF) if each of its critical points has a finite forward orbit under iteration of $f$. 
In~\cite{MdADS}, Silverman  proposes  PCF functions as dynamical analogues of CM abelian varieties,   but the strength of this analogy is not fully understood.  From Thurston's rigidity theorem~\cite{ThurstonProof}, we know that outside of a single well-understood family, post-critically finite functions on $\PP^1$ defined over $\CC$ are algebraic.
 In this paper, we use a particular class of post-critically finite polynomials to answer three questions about PCF maps to better understand the role that these functions might have as valuable test objects in arithmetic dynamics.
 
The first of these  questions concerns the concept of (potential) good reduction and is motivated by the analogy to CM abelian varieties.  
It is known that a PCF polynomial of degree $d\geq 2$ has potential good reduction at a prime $p$ if $p > d$ or if $d = p^n$ (see, for example, \cite{anderson} and\cite{epstein}). It is natural to ask if this is sharp. That is, given a degree $d$ and prime $p$ such that $p < d$ and $d \neq p^n$, does there exist a PCF polynomial $f(z) \in \overline\QQ[z]$ of degree $d$ such that $f$ does not have potential good reduction at $p$? In Section~\ref{sec:badred}, we provide a complete answer to this question and provide an example of a PCF polynomial with persistent bad reduction in every case in which it is possible.

\begin{namedtheorem}[Theorem 1]\label{thm:testref}\label{thm:badred}
Let $d, p \in \ZZ$, with $d \geq 2$ and $p$ prime. Write $d=p^n\ell$ with $n\geq 0$ and $p\nmid \ell$. Then there exists a post-critically finite polynomial $f \in \overline{\QQ}[z]$ of degree $d$ with persistent bad reduction at $p$ if and only if $\ell >p$.
\end{namedtheorem}

Height functions are ubiquitous in arithmetic geometry as a measure of the arithmetic complexity of objects. If PCF functions are in fact special arithmetically, one might expect their heights to reflect this. Indeed, in~\cite{BIJL} the authors show that PCF maps of degree $d\geq 2$ form a set of bounded height in the moduli space $\mathcal M_d$ of degree-$d$ rational functions, but the height bound depends on the degree of the map.
{\bf Conservative polynomials} are a subclass of PCF polynomials in which every critical point is fixed. In~\cite{MdADS}, Silverman considers monic conservative polynomials, normalized so that $f(0)=0$, and asks if the heights of such polynomials grow at a slow rate relative to their degrees. 

More precisely, let $\mathcal{C}_d^{\text{poly}}$ be the set of of normalized conservative polynomials of degree $d$ in $\overline{\mathbb{Q}}[z]$ and let $h(f)$ be the height of a polynomial (see Section~\ref{sec:heights} for a precise definition). Sliverman asks if the following statements are true:

\begin{align*}
\lim_{d \to \infty} \max_{f \in \mathcal{C}_d^{\text{poly}}} h(f) &= 0?\\
\limsup_{d \to \infty} \max_{f \in \mathcal{C}_d^{\text{poly}}} \frac{h(f)}{(\log{d})/d} &< \infty?
\end{align*}
In Section~\ref{sec:heights}, we provide a negative answer to both of these questions.  



Our results use conservative dynamical Belyi polynomials as a primary tool. These polynomials, studied  in~\cite{manesbelyi}, have exactly two finite critical points, both of which are fixed by the polynomial. The third author used dynamical Belyi maps to describe and study  bicritical polynomials in her Ph.D. thesis~\cite{TobinThesis}. The applications of these functions thus far suggest that they may indeed prove to be a fertile testing ground for arithmetic dynamics.

\section{Background}\label{sec:bg}
\subsection{Dynamical systems definitions and notation}
Let $K$ be a field and let  $f(z) \in K[z]$ have degree $d\geq 2$.   
Critical points of $f$ are the points $\alpha \in \overline K$ such that $f'(\alpha) = 0$. The \emph{orbit} of a point $a \in K$ under $f$ is the set obtained by iterating the function $f$ starting with the point $a$: $\{ a, f(a), f(f(a)), \dots \}$. 
\begin{defn}
A polynomial $f$ is {\bf post-critically finite} (PCF) if the orbit of each critical point is finite.  A polynomial is {\bf post-critically bounded} with respect to a given absolute value if the orbit of each critical point is bounded with respect to that absolute value. 
\end{defn}

 Let $\phi(z) = az+b \in \overline K[z]$ be an affine change of coordinates on $\overline K$ and define $f^\phi := \phi^{-1} \circ f \circ \phi \in \overline K[z]$. We say that $f$ and $f^\phi$ are conjugates of each other. This conjugation provides a natural dynamical equivalence relation on polynomials since it respects iteration: $\left(f^\phi\right)^n = \left(f^n\right)^\phi$ for all $n\geq 1$. 

\subsection{Conservative dynamical Belyi polynomials}

A dynamical Belyi map is a morphism $f : \PP^1 \to \PP^1$ ramified only over $\{0, 1, \infty \}$ such that $f\left(\{0, 1, \infty \}\right) \subseteq \{0, 1, \infty \}$. If $f$ is a polynomial, then $\infty$ is a totally ramified fixed point.  A {\bf conservative} polynomial is one where every critical point is fixed. So a conservative dynamical Belyi polynomial is one where the only possible finite critical points are $0$ and $1$, and where each of those points is a fixed point. Thus, conservative dynamical Belyi polynomials are examples of PCF polynomials.

Let $f \in \CC[z]$ be a polynomial of degree $d$. Let $e_\alpha(f)$ represent the ramification index of $f$ at~$\alpha$; that is, $e_\alpha(f) = \ord_{z=\alpha}(f(z) - f(\alpha))$. If $f$ is a conservative dynamical Belyi polynomial, then the Riemann-Hurwitz Formula for $\PP^1$~\cite[Theorem 1.1]{ADS} tells us that $e_0(f) + e_1(f) = d+1$. From~\cite[Proposition 1]{manesbelyi}, if we fix $d$, $e_0 $, and $e_1 $ subject to this constraint, there is a unique conservative dynamical Belyi polynomial with this ramification data,  
and in fact that polynomial is defined over $\QQ$. We can then take this unique polynomial as our definition of a conservative dynamical Belyi polynomial. In the definition below, for ease of notation we have $e_0 = d-k$ and $e_1 = k+1$.

\begin{definition}[{~\cite[Proposition 2]{manesbelyi}}]\label{def:belyi} For integers $d$ and $k$ such that $d \geq 3$ and $1 \leq k \leq d-2$, define the conservative dynamical Belyi polynomial $B_{d,k}$ as follows:
\begin{equation}\label{eqn:DynBeli}
 B_{d,k}(z) = \sum_{i=0}^k a_iz^{d-k+i}, \text{ where }  a_i = (-1)^{i} {d\choose {k-i}}{{d-k+i-1}\choose i}.
 \end{equation}

\end{definition}
There are $d-2$ conservative dynamical Belyi polynomials for each degree $d \geq 3$, and they all have integer coefficients.
\begin{example}\label{ex:deg4} For example, there are two conservative dynamical Belyi polynomials of degree 4, obtained by choosing $k=1$ or $k=2$ in the formula above:
\[ B_{4,1}(z) = -3z^4+4z^3 \]
\[ B_{4,2}(z) = 3z^4-8z^3+6z^2.\]

Note that these two polynomials are conjugates of each other, as one can be obtained from the other by conjugating by the linear map $\phi(z) = 1-z$. More generally, $B_{d,k}$ is conjugate to $B_{d,d-k-1}$ via the same change of coordinates swapping 0 and 1.
\end{example}

In the sections that follow, we explore properties of these conservative dynamical Belyi maps and use them to answer questions related to post-critically finite maps and conservative polynomials in general. We refer the reader to ~\cite{manesbelyi} and~\cite[Section 3]{PilgramDessins} for further background on dynamical Belyi maps.


\section{Post-Critically Finite Polynomials and Bad Reduction}\label{sec:badred}

 In order to properly talk about good and bad reduction of polynomials, we will work in a non-archimedean field. We can then translate results to a number field $L$ by considering the completion $L \hookrightarrow L_\nu$ where $\nu$ is any absolute value on $L$. Since any polynomial $f\in \overline{\QQ}[z]$ is in fact defined over a number field, this allows us to make sense of good and bad reduction for polynomials with algebraic coefficients. We set the following notation:
 
 \begin{tabular}{ll}
 $K$ & a non-archimedean field, complete with respect to an absolute value $\nu$.\\
$\OO_K$ & the ring of integers $\{ \alpha \in K \colon | \alpha|_\nu \leq 1\}$.\\
$\pp$ & the maximal ideal of $\OO_K$.\\
$k$ & the residue field of $K$; that is $\OO_K / \pp\OO_K$.\\
$\overline f$ & the polynomial in $k[z]$ obtained by reducing the coefficients of $f \in \OO_K[z]$ modulo $\pp$.
\end{tabular}



Let $f \in K(z)$. We may choose $\phi(z)\in \PGL_2(\overline{K})$ so that $f^\phi \in \mathcal O_K(z)$ and at least one coefficient of $f$ is a unit. 
\begin{definition}
We say that $f$ has {\bf good reduction} if $\deg(\overline f) = \deg(f)$, and $f$ has {\bf potential good reduction} if there is some $\phi(z) \in \PGL_2(\overline{K})$, such that $f^\phi$ has good reduction. If $f$ does not have good reduction, then it has {\bf bad reduction}. If $f$ does not have potential good reduction, we say it has {\bf persistent bad reduction}.
\end{definition}
The polynomial $f$ has persistent bad reduction if every polynomial conjugate to $f$ has bad reduction; that is, it has bad reduction in every coordinate.
%
To prove our main theorem, we will use the following lemma of Benedetto. 

\begin{lemma}[\protect{\cite[Corollary 4.6]{MR1813109}}]
\label{lem:pgr} 

Let $f\in K[z]$ be a polynomial, and let $g$ be a polynomial conjugate of $f$ such that $g$ is monic and $g(0)=0$. Then $f$ has potential good reduction if and only if $g$ has  good reduction.

\end{lemma}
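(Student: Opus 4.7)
The plan is to prove both directions of the equivalence. The backward direction is immediate from the definition of potential good reduction: if $g$ has good reduction and $g$ is a conjugate of $f$, then $f$ has potential good reduction by exhibiting the conjugation that produces $g$. Hence the real content lies in the forward direction, which I now outline.

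The approach for the forward direction is to use the hypothesis of potential good reduction to produce a conjugate $h := f^{\phi_0}$ with good reduction; after passing to a finite extension $L/K$ on which everything is defined (with $\nu$ extended and ring of integers $\OO_L$), the leading coefficient $b_d$ of $h \in \OO_L[z]$ is a unit. Since $g$ is also conjugate to $f$, I can write $g = h^\psi$ for some affine $\psi(z) = \gamma z + \delta$. The plan is then to show that $\psi$ and $\psi^{-1}$ both have coefficients in $\OO_L$ with unit leading coefficient, which will force $g = \psi^{-1} \circ h \circ \psi$ to lie in $\OO_L[z]$ and therefore, being monic, to have good reduction.

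The key step is to extract, from the monic and $g(0)=0$ hypotheses, the integrality of $\gamma$ and $\delta$. Monicity pins down the leading coefficient of $h^\psi$ as $b_d \gamma^{d-1}=1$, which gives $|\gamma|_\nu = 1$ since $|b_d|_\nu = 1$. The fixed-point condition $g(0)=0$ translates to $h(\delta)=\delta$, so $\delta$ is a root of $h(z)-z$; dividing this polynomial by its unit leading coefficient $b_d$ exhibits $\delta$ as a root of a monic polynomial in $\OO_L[z]$, hence $\delta$ is integral over $\OO_L$ and lies in $\OO_L$ after possibly enlarging $L$ further.

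The only obstacle I anticipate is careful bookkeeping with field extensions, since the conjugating map $\phi_0$ witnessing potential good reduction of $f$ need not have coefficients in $K$ and neither need the fixed points of $h$. This is a mild issue: the definitions of good and potential good reduction transfer unchanged to any finite extension equipped with the extension of $\nu$, so working in an $L$ that contains all the relevant coefficients and fixed points suffices to close the argument.
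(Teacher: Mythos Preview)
Your argument is correct. The paper itself does not supply a proof of this lemma; it is simply quoted from Benedetto's work (the citation in the lemma header), so there is no in-paper proof to compare against. Your direct approach---pulling back the monic and $g(0)=0$ conditions through the conjugacy $g=h^\psi$ to force $\gamma$ and $\delta$ to be integral units and fixed points respectively---is a clean self-contained argument that avoids appealing to the machinery of non-archimedean Julia sets underlying Benedetto's original corollary. One small point worth making explicit: you note that $|\gamma|_\nu=1$ follows from $b_d\gamma^{d-1}=1$, but you should also observe (as you did for $\delta$) that $\gamma$ itself lies in $\OO_L$ after a further finite extension, since it is a root of the monic integral polynomial $z^{d-1}-b_d^{-1}$; this is needed before you can conclude $\psi,\psi^{-1}\in\OO_L[z]$. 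With that addition the proof is complete.
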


We begin by showing that all conservative dynamical Belyi polynomials have persistent bad reduction at some prime. 

\begin{prop}\label{prop:monicconj}
 Let $B_{d,k}(z)$ be a conservative dynamical Belyi polynomial. Then there exists some prime $p$ such that $B_{d,k}$ has persistent bad reduction at $p$. 
\end{prop}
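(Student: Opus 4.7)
My plan is to apply Lemma~\ref{lem:pgr}, which reduces the problem to exhibiting a single monic conjugate of $B_{d,k}$ fixing $0$ that fails to have good reduction at some prime. Since $0$ is already a fixed point of $B_{d,k}$, such a conjugate is $g(z) := \sigma^{-1} B_{d,k}(\sigma z)$, where $\sigma$ is any $(d-1)$-th root of $1/a_k$. From Definition~\ref{def:belyi} the leading coefficient of $B_{d,k}$ is $a_k = (-1)^k \binom{d-1}{k}$, and the constraints $1 \leq k \leq d-2$ force $\binom{d-1}{k} \geq 2$, so I may pick a rational prime $p$ dividing $a_k$. After passing to a finite extension of $\QQ_p$ containing $\sigma$, the uniquely extended valuation satisfies $v_p(\sigma) = -v_p(a_k)/(d-1) < 0$. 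Expanding, the coefficient of $z^{d-k+i}$ in $g$ is $a_i \sigma^{d-k+i-1}$, whose $p$-adic valuation equals $v_p(a_i) - (d-k+i-1)v_p(a_k)/(d-1)$.

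The crucial input will be the fixed-point identity $B_{d,k}(1) = 1$, which rearranges to $\sum_{i=0}^k a_i = 1$. Because $p$ divides $a_k$ but not $1$, some $a_{i_0}$ with $0 \leq i_0 \leq k-1$ must be a $p$-adic unit. For this index, the displayed valuation simplifies to $-(d-k+i_0-1)v_p(a_k)/(d-1)$, which is strictly negative since $v_p(a_k) \geq 1$ and the exponent satisfies $d-k+i_0-1 \geq d-k-1 \geq 1$ (using $k \leq d-2$). Hence $g$ has a coefficient outside $\OO_K$, so $g$ does not have good reduction, and Lemma~\ref{lem:pgr} then promotes this to persistent bad reduction of $B_{d,k}$ at $p$.

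The technical step that this approach sidesteps is controlling $p$-adic valuations of the individual coefficients $a_i$ directly from the ratio-of-products formula in Definition~\ref{def:belyi}, which would be delicate. The elementary fixed-point identity $B_{d,k}(1)=1$ handles the hardest part of the argument for free by forcing at least one of the lower $a_i$ to be coprime to $p$, and this is the real content of the proof.
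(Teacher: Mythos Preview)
Your proof is correct and follows essentially the same approach as the paper's: both conjugate by a scaling to make $B_{d,k}$ monic (your $\sigma$ is the reciprocal of the paper's $\beta$), both pick a prime $p$ dividing $a_k=(-1)^k\binom{d-1}{k}$, and both use the fixed-point identity $B_{d,k}(1)=1$ to locate a lower coefficient $a_{i_0}$ not divisible by $p$, yielding a coefficient of negative valuation in the monic conjugate and hence persistent bad reduction via Lemma~\ref{lem:pgr}.
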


\begin{proof}
Write $B_{d,k}(z) = \sum_{i =0}^k a_i z^{d-k+i} \in \ZZ[z]$ with the $a_i$ given in equation~\eqref{eqn:DynBeli}.  
Note that  $a_k = (-1)^k \binom{d-1}{k}$. Since $1 \leq k \leq d-2$,  we see that  $a_k \neq \pm1$.
The fact that $B_{d,k}(1) = 1$ means that the coefficients $a_i$ do not have a common prime factor, since any such factor would divide the linear combination  $B_{d,k}(1) $.
Therefore we can find a prime $p$ and a value $i \neq k$ such that $p \mid a_k$ and $p\nmid a_i$. 
Choose $\phi \in \PGL_2$ such that $\phi (z) = \frac{ z}{\beta}$ where $\beta^{d-1} = a_k$.  Then 
\begin{equation}\label{eq:BelyConj}
B_{d,k}^\phi =\sum\limits_{i =0}^k \frac{a_i}{\beta^{d-k+i-1}} z^{d-k+i} \in \QQ(\beta)[z].
\end{equation}
Note that $B_{d,k}^\phi(z)$ is monic and fixes $0$. Let $v$ denote any valuation extending the $p$-adic valuation to $\QQ(\beta)$.
Then 
\[
v (a_i) = 0 \text{ and } v( \beta^{d-k+i-1} ) >0, \text{ so } v \left( \frac{a_i}{\beta^{d-k+i-1}} \right) <0.\]
 Hence $B_{d,k}^\phi$ has bad reduction at $p$, so by Lemma~\ref{lem:pgr}, $B_{d,k}$ has persistent bad reduction at $p$. 
\end{proof}

Note: Recall that the {\bf content} of a polynomial with integer coefficients is the greatest common divisor of the coefficients. 
We see that the statement and proof of Proposition~\ref{prop:monicconj}  apply to any $f(z) \in\ZZ[z]$ such that $f$ has content~1,  the leading coefficient is not a unit, and $f(0)=0$.
The proposition below provides, for each degree $d\geq 3$ and prime $p$ meeting the conditions of Theorem 1, 
a polynomial of degree $d$ with persistent bad reduction at $p$. All of our examples come from the family of  conservative dynamical Belyi polynomials.

\begin{proposition}\label{prop2} Let $p$ be prime and let $d=p^n \ell$, with $\ell>p$, $ p \nmid \ell$, and $n \geq 0$. Write $\ell = pq+r$ with $ 1 \leq r \leq p-1$ and $q \geq 1$. Let $k=p^n r$. Then the conservative dynamical Belyi polynomial $B_{d,k}$ has persistent bad reduction at $p$.
\end{proposition}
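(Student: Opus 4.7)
The plan is to show that with the specified choices of $d$ and $k$, the leading coefficient $a_k = (-1)^k\binom{d-1}{k}$ of $B_{d,k}$ is divisible by $p$, and then to invoke the argument already used in the proof of Proposition \ref{prop:monicconj} verbatim. Since the coefficients of $B_{d,k}$ have content $1$ (because $B_{d,k}(1)=1$), once $p\mid a_k$ is established, there will automatically exist some $i\neq k$ with $p\nmid a_i$, and conjugation by $\phi(z)=z/\beta$ with $\beta^{d-1}=a_k$ will produce the monic conjugate fixing $0$ with a coefficient of negative valuation, so Lemma \ref{lem:pgr} will finish the argument.

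Before doing that, I would quickly check that the indices are in the allowed range: $k=p^n r\geq 1$, and $d-k=p^n(\ell-r)=p^{n+1}q\geq p\geq 2$, so $1\leq k\leq d-2$ and $B_{d,k}$ is indeed one of the polynomials from Definition \ref{def:belyi}.

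The technical core is showing $p\mid\binom{d-1}{k}$, which I would handle using Lucas's theorem by exhibiting a base-$p$ digit position at which the digit of $k$ exceeds the digit of $d-1$. Since $k=p^n r$ with $1\leq r\leq p-1$, the base-$p$ expansion of $k$ has digit $r$ at position $n$ and zeros below. For $d-1$, I would write
\[
d-1 \;=\; p^n\ell - 1 \;=\; p^{n+1}q + p^n r - 1,
\]
and observe that $p^{n+1}q + p^n r$ has digit $r$ at position $n$ and zeros at positions $0,\dots,n-1$; subtracting $1$ forces a borrow that propagates through positions $0,\dots,n-1$ (each becoming $p-1$) and reduces the digit at position $n$ to $r-1$. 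Thus the digit of $d-1$ at position $n$ is $r-1$, which is strictly less than the digit $r$ of $k$ at that position, so Lucas gives $\binom{d-1}{k}\equiv 0\pmod p$.

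The only real obstacle is getting the base-$p$ digit computation right (the borrow propagation must stop cleanly at position $n$, which is exactly what $r\geq 1$ guarantees). After that, the persistence of bad reduction at $p$ follows immediately by repeating the conjugation-and-valuation calculation from Proposition \ref{prop:monicconj}.
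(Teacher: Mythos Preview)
Your proof is correct and follows essentially the same route as the paper: both establish $p\mid a_k=(-1)^k\binom{d-1}{k}$ via base-$p$ digit arithmetic and then apply the conjugation argument of Proposition~\ref{prop:monicconj}. The only cosmetic differences are that the paper uses Kummer's carry-counting formulation (and explicitly verifies $v_p(a_0)=v_p\binom{d}{k}=0$ to pin down the offending coefficient as $a_0/\beta^{d-k-1}$), whereas you use Lucas's digit-comparison formulation and rely on the content-$1$ observation to get some $i$ with $p\nmid a_i$ nonconstructively.
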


\begin{proof}
We have 
\[ 
B_{d,k}(z) = a_k z^d + a_{k-1} z^{d-1} + \dots + a_0 z^{d-k} \in \QQ[z] \subseteq \QQ_p[z].
\]
Following the proof of Proposition~\ref{prop:monicconj}, we choose
$\beta\in \overline{\QQ_p}$ such that $\beta^{d-1} = a_k$, and define $B_{d,k}^\phi(z)$ as in equation~\eqref{eq:BelyConj}.
Note that $a_0=\binom{d}{k}$, and consider the coefficient of $z^{d-k}$ for $B_{d,k}^\phi(z)$, namely $\frac{a_0}{\beta^{d-k-1}}$.  

As in Proposition~\ref{prop:monicconj}, let $v$ denote any valuation extending the $p$-adic valuation. We claim that $v(\beta^{d-k-1})>v(a_0)$. In other words, we claim the following:
\[ \frac{d-k-1}{d-1} v_p(a_k)>v_p(a_0),\]
or
\[ \frac{d-k-1}{d-1} v_p\left( \binom{d-1}{k} \right) > v_p \left( \binom{d}{k}\right).\]

Using Kummer's Theorem for binomial coefficients, we compute $v_p \left( \binom{d}{k}\right)$ by writing both $d$ and $k$ in base $p$ and counting the number of carries when we add $k$ to $d-k$. Since $k=p^n r$ and $d-k$ is a multiple of $p^{n+1}$, there are no carries when added together, so $v_p \left( \binom{d}{k}\right) = 0$. 

Looking at $a_k = \pm \binom{d-1}{k}$, we write $k$ and $d-k-1$ in base $p$, truncated modulo $p^{n+1}$:
\[ k=rp^n \]
\[ d-k-1 \equiv (p-1) p^n + (p-1) p^{n-1} + \dots + (p-1)  \pmod{p^{n+1}}.\]
Adding these together, we get at least one carry in the $p^n$ position,  so $v_p \left( \binom{d-1}{k}\right) \geq 1$.
Therefore $B_{d,k}^\phi(z)$ has bad reduction at $p$.  By Lemma~\ref{lem:pgr}, $B_{d,k}$ has persistent bad reduction at $p$.
\end{proof}

To prove Theorem 1, it remains to show that for any prime $p$, if $d= p^n \ell$ with $1\leq \ell<p$ and $n \geq 0$, then all PCF polynomials in $\overline{\QQ}[z]$ of degree $d$ have potential good reduction at $p$.

\begin{proof}[Proof of Theorem 1]

Let $p$ be prime and let $d,n,\ell \in \ZZ$ with $n \geq 0, 1\leq \ell < p$, and $d=p^n \ell \geq 2$. Let $f(z) \in \overline{\QQ}[z]$ be a PCF polynomial of degree $d$. Let $|\cdot|_p$ denote the absolute value on $\QQ$ normalized so that $|p|_p = \frac 1 p$.

After conjugation, we may assume $f$ is monic and $f(0)=0$. Write $f(z) = z^d + a_{d-1}z^{d-1} + \dots + a_1 z$. Since $f \in L[z]$ for some number field $L$, we may let  $|\cdot | $ denote the absolute value on $L$ extending $|\cdot|_p$, and let $v(\cdot)$ denote the associated valuation.

 Let $\{ \alpha_i\}$ denote the $d$ fixed points for $f$, listed with multiplicity. Let $\{ \gamma_i\}$ denote the $d-1$ critical points for $f$, also listed with multiplicity. Suppose for contradiction that $f$ is post-critically finite but has persistent bad reduction at $p$. Then we must have $|a_i|>1$ for some $i$. The Newton polygon for $f(z)-z$ must have a segment of positive slope, which implies that $f$ has at least one fixed point $\alpha_1$ satisfying $|\alpha_1|>1$. Looking at the rightmost segment of the Newton polygon for $f(z)-z$, suppose it connects the points $(k, v(a_k))$ and $(d,0)$. Let $r=-v(a_k)/(d-k)>0$. Then $f$ has exactly $d-k$ fixed points of absolute value $p^r$, and all other fixed points are smaller. Write $f(z) = z+\prod_{i=1}^d (z-\alpha_i)$. Then if $|x|>p^r$, we have $|f(x)| = |x + \prod_{i=1}^d (x-\alpha_i)| = |x|^d>|x|$, and so $x$ has unbounded orbit. Since $f$ is post-critically finite, and thus post-critically bounded with respect to this absolute value, all critical points for $f$ must satisfy $|\gamma_i| \leq p^r$.

Now, look at the rightmost segment of the Newton polygon for $f'(z)$. The slope of this segment is at least $\frac{v(d)-v(k a_k)}{d-k} \geq r$, with equality only possible if $n=v(d) = v(k)$ (in other words, if $k$ is a multiple of $p^n$). Since no critical point can be larger than the largest fixed point, we must have equality, and the rightmost segment of the Newton polygon for $f'(z)$ must go through the points $(d-1, n)$ and $(k-1, v(ka_k))$. (It could possibly extend further past this point.) So, we must have exactly $d-k$ fixed points of absolute value $p^r$, and at least $d-k$ critical points of absolute value $p^r$, counting with multiplicity, with all other fixed points and critical points of absolute value less than $p^r$.

Finally we look at the set of open disks $D(\alpha_i, p^r)$ of radius $p^r$ centered at each fixed point of~$f$ and recall that $0$ is a fixed point of $f$. Every critical point must lie in one of these disks because $f$ is post-critically finite, so it is also post-critically bounded. (If $x$ is a point outside this set of disks then the orbit of $x$ under $f$ is not bounded: $|x| \geq p^r$ and $|f(x)| = |x+ \prod_{i=1}^d (x- \alpha_i)| \geq p^{rd}$.) The Newton polygon argument above implies that $D(0, p^r)$ contains fewer critical points than fixed points (again, counting with multiplicity), since this disk has $k$ fixed points and at most $k-1$ critical points. Since every critical point must lie in one of these disks for $f$ to be post-critically finite, there must be a different disk $D(\alpha_2, p^r)$ containing at least as many critical points as fixed points. Conjugate by translation to move $\alpha_2$ to 0. The newly conjugated map has fewer critical points than fixed points  of absolute value $p^r$, which cannot occur since the newly conjugated map will satisfy the same criteria and thus the same argument holds. Thus, every coefficient $a_i$ of $f$ satisfies $|a_i| \leq 1$. Since  $f$ is monic, it has good reduction at~$p$. 
 \end{proof}

In the cases where $d<p$ or $d=p^n$, Theorem 1 recovers the previously known results from~\cite{anderson} and\cite{epstein}:

\begin{corollary} If $d=p^n$ for some prime $p$, or if $d<p$, then all PCF polynomials of degree $d$ have potential good reduction at $p$. 
\end{corollary}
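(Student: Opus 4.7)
The plan is to deduce this corollary as the contrapositive of the forward direction of Theorem 1. Theorem 1 asserts that a PCF polynomial of degree $d$ with persistent bad reduction at $p$ exists if and only if $d = p^n \ell$ for some $n \geq 0$ and $\ell > p$ with $p \nmid \ell$. Therefore, to show that every degree-$d$ PCF polynomial in $\overline{\QQ}[z]$ has potential good reduction at $p$, it suffices to verify that neither hypothesis ($d = p^n$ or $d < p$) admits such a factorization.

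For the first case, if $d = p^m$ for some $m \geq 0$, then any expression $d = p^n \ell$ with $p \nmid \ell$ forces $n = m$ and $\ell = 1$; in particular $\ell < p$, so the condition $\ell > p$ fails. For the second case, if $d < p$, then in any factorization $d = p^n \ell$ with $n \geq 0$ and $p \nmid \ell$ we must have $n = 0$ (otherwise $d \geq p$), giving $\ell = d < p$, and again the required inequality $\ell > p$ fails. In both situations no PCF polynomial of degree $d$ has persistent bad reduction at $p$, which by the definition of persistent bad reduction is exactly the claim that every such polynomial has potential good reduction at $p$.

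There is no real obstacle here: the corollary is a clean bookkeeping consequence of Theorem 1, and all of the analytic content — in particular the Newton polygon argument constraining the absolute values of critical points in terms of those of fixed points — has already been carried out in the proof of Theorem 1.
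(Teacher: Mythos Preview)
Your proposal is correct and mirrors the paper's treatment exactly: the paper presents this corollary as an immediate consequence of Theorem~1 without a separate argument, and your verification that the hypotheses $d=p^m$ or $d<p$ force $\ell<p$ in the unique factorization $d=p^n\ell$ with $p\nmid\ell$ is precisely the bookkeeping needed.
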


\begin{example} Let $d = 18$. By Theorem 1, all PCF polynomials $f \in \overline{\QQ}[z]$ of degree $18$ have potential good reduction at primes larger than $18$, so we will consider primes $p\leq 17$. Note that Theorem 1 implies that all degree 18 PCF polynomials will have potential good reduction at~$p=3$, since $18 =3^2\cdot 2$, so in this case $\ell < p$. 
The following table lists the primes $3 \neq p \leq 17$, the values $n,l, r,$ and $k$ as described in Proposition~\ref{prop2}, and the Belyi polynomial $B_{d,k}$ with persistent bad reduction at $p$. 

\small
\begin{center}
\begin{table}[h]
\label{tab:table1}
\begin{tabular}{c|c|c|c|c|c|l}
$p$ & factor $d$ & $n$ & $\ell$ & $r \equiv \ell \mod p$ & $k = p^n r$ & $B_{18,k}$\\
\hline
2 & \tiny{$18=2^1\cdot 9$ }& 1 & 9 & 1 & 2 & \tiny{$136z^{18}-288z^{17}+153z^{16}$}\\
5 &\tiny{ $18=5^0\cdot 18$ } &0 & 18 & 3 & 3 & \tiny{$-680z^{18}+2160z^{17}-2295z^{16}+816z^{15}$}\\
7 & \tiny{$18=7^0\cdot 18$} &0 & 18 & 4 & 4 & \tiny{$2380z^{18} -10080z^{17}+16065z^{16}-11424z^{15}+3060z^{14}$}\\
11 &\tiny{ $18=11^0\cdot 18$} &0 & 18 & 7 & 7 & \tiny{$-1144z^{18}+144144z^{17}-459459z^{16}+816816z^{15} $}\\
& & & & & & \tiny{$\qquad -875160z^{14}+565488z^{13}-204204z^{12}+31824z^{11}$}\\
13 & \tiny{$18=13^0\cdot 18$} &0 & 18 & 5 & 5 & \tiny{$-6188z^{18}+32760z^{17}-68615z^{16}+74256z^{15}-39780z^{14}+8568z^{13}$}\\
17 &\tiny{ $18=17^0\cdot 18$} &0 & 18 & 1 & 1 & \tiny{$-17z^{18}+18z^{17}$}\\
\end{tabular}
\end{table}
\end{center}
\normalsize
\end{example}

Notice that the examples in Table ~\ref{tab:table1} may have persistent bad reduction at primes other than the listed prime. In fact, they will have persistent bad reduction at all primes that divide the leading coefficient, which can be seen in the proof of Proposition ~\ref{prop:monicconj}.

\section{Everywhere good reduction}
While Theorem 1 says that a PCF polynomial of degree $d$ \emph{may} have persistent bad reduction at $p$ for certain values of $p$, there is no $(d,p)$ pair for which a PCF polynomial of degree $d$ \emph{must} have persistent bad reduction at $p$. There are obvious unicritical PCF polynomials (such as $z^d$ or $z^d-1$ with $d$ even) that have good reduction everywhere, but other examples are more elusive.

\begin{question}[Adam Epstein, private communication]
Can we find, for every degree $d \geq 2$, a PCF polynomial $f(z) \in \overline\QQ[z]$ of degree $d$ that has potential good reduction at every prime, where $f(z)$ is not (conjugate to) a unicritical polynomial or a composition of unicritical polynomials?
\end{question}

Using compositions of unicritical polynomials, we can construct infinitely many PCF functions with everywhere good reduction by the following method~\cite{Pingram}: 
Consider the composition of $z^d + a$ and $z^e + b$ :
$f(z) = (z^d+a)^e + b$. If $a =0$ then $f$ is unicritical, so assume $a\neq 0$. The critical points are at the $d^\text{th}$ roots of $-a$ and at $0$.  If $f$ is PCF, then $a$ and $b$ are in fact algebraic integers, so $f$ will have good reduction at every prime. By looking at the algebraic conditions given by forcing finite critical orbits of varying lengths, we should have infinitely many pairs $(a, b) \in \overline\QQ^2$ such that $f$ is PCF. And of course, we can compose more than two unicritical functions to get even more examples.

We are not able to fully answer Epstein's question, but we provide in this section some results in that direction. The proposition below provides, for each pair $(d,p)$ with $d\geq 3$ and $p$ prime,  an example of a bicritical PCF polynomial of degree $d$ with good reduction at $p$. In infinitely many cases (when $d=p^k + 1$ for some prime $p$ and positive integer $k$), this example has good reduction everywhere, but in general this proposition does not provide an example of a PCF polynomial in every degree with everywhere good reduction.

We do know that our bicritical examples in the following proposition are not compositions of unicritical polynomials because such compositions are not bicritical. As noted above, if we compose two unicritical polynomials of the form $z^d+a$ and $z^e+b$, where $a \neq 0$, the critical points of the composition $(z^d+a)^e+b$ are at the $d^\text{th}$ roots of $-a$ and at $0$, so we have more than two distinct critical points whenever $d>1$.
\begin{proposition} \label{prop:goodred}
For any integer $d \geq 3$ and prime $p$, there exists a bicritical post-critically finite polynomial $f \in \overline{\mathbb{Q}}[z]$ of degree $d$ with good reduction at $p$.
\end{proposition}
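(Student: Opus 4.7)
My plan is to draw the example from the conservative dynamical Belyi family $B_{d,k}$ whenever possible, and to resort to a direct construction only in the residual case $d=p^n+1$. Each $B_{d,k}\in\ZZ[z]$ is bicritical, PCF, fixes $0$, and has leading coefficient $a_k=(-1)^k\binom{d-1}{k}$, so it automatically has good reduction at $p$ as soon as $p\nmid\binom{d-1}{k}$. The first step is therefore to decide, for a given $(d,p)$, whether some $k$ with $1\le k\le d-2$ satisfies $p\nmid\binom{d-1}{k}$.

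By Kummer's theorem, $v_p\binom{d-1}{k}$ counts the carries when $k$ is added to $d-1-k$ in base $p$. Writing $d-1=\sum c_i p^i$, the number of carry-free choices of $k$ in $\{0,\ldots,d-1\}$ is $\prod(c_i+1)$, and this product exceeds $2$ (so admits an intermediate value $1\le k\le d-2$) precisely when $d-1$ is not a pure power of $p$. Hence whenever $d\ne p^n+1$, picking $k\in\{1,\ldots,d-2\}$ with $p\nmid\binom{d-1}{k}$ yields a $B_{d,k}$ with good reduction at $p$, and the proposition follows in this range without further work.

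For $d=p^n+1$ with $n\ge 1$, every $B_{d,k}$ has $p\mid a_k$, and the argument of Proposition~\ref{prop:monicconj} shows that every conservative Belyi conjugate retains bad reduction at $p$. I must therefore step outside the conservative family and construct a bicritical PCF polynomial whose critical points are preperiodic but not fixed. The guiding example is $d=3$, $p=2$: the polynomial $z^3-3z$ is bicritical with critical points $\pm 1$ mapping in one step to the fixed points $\mp 2$, lies in $\ZZ[z]$, and is monic, so has good reduction at every prime. For general $d=p^n+1$, my plan is to parameterize the bicritical pencil $f(z)=(z-\alpha)^{d-1}(z-\beta)+C$, impose the PCF conditions ``$C$ is a fixed point of $f$ (so that the ramified critical point $\alpha$ lands on $C$ after one iterate), and the simple critical point $(\alpha+(d-1)\beta)/d$ has finite orbit,'' and then exhibit a parameter choice whose monic-fixing-$0$ representative has $p$-integral coefficients.

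The main obstacle will be this last step. The PCF constraints produce a polynomial system in $(\alpha,\beta,C)$ whose solutions over $\overline{\QQ}$ are straightforward to write down, but controlling the $p$-adic (and, for the surrounding paper's remark on everywhere good reduction, global) valuations of the coefficients of the normalized representative requires a careful resultant computation. The special structure $d-1=p^n$ is what one expects to force the offending denominators to be units, paralleling the way the identity $(z-1)^2(z+2)-2=z^3-3z$ clears all denominators in the $d=3$ case. Verifying this integrality while also checking that the two critical points remain distinct (so that the polynomial is genuinely bicritical, not unicritical or a composition of unicriticals) is where the real work lies.
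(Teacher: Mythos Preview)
Your case split is sound, and the Kummer/Lucas count in Case~1 is correct: when $d-1$ is not a pure prime power $p^n$, some $k\in\{1,\dots,d-2\}$ has $p\nmid\binom{d-1}{k}$, and then $B_{d,k}\in\ZZ[z]$ already has content~$1$ and unit leading coefficient at $p$, so it has good reduction at $p$ with no conjugation needed. This is a cleaner argument for those primes than the paper's.

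The gap is Case~2, $d=p^n+1$. You correctly observe that every $B_{d,k}$ fails there, and you outline a construction with $\alpha$ strictly preperiodic to a fixed point and the simple critical point also preperiodic --- but you stop before doing the valuation computation, and you say yourself that ``verifying this integrality \ldots\ is where the real work lies.'' As written, the proposal does not establish the proposition: one example ($z^3-3z$ for $d=3$, $p=2$) is not a proof for all $d=p^n+1$.

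The paper's proof is, in effect, exactly your Case~2 carried to completion, and done uniformly for all $(d,p)$. It takes the one-parameter deformation
\[
f(z)=a\bigl(-(d-1)z^d+dz^{d-1}\bigr)+\tfrac{d}{d-1}
\]
of $B_{d,1}$, so that $0$ is now strictly preperiodic to the fixed point $\tfrac{d}{d-1}$, and imposes that $f(1)$ be fixed. The resulting equation in $a$,
\[
(d-1)\,a\Bigl(a+\tfrac{d}{d-1}\Bigr)^{d-1}+1=0,
\]
has Newton polygon at $p$ with vertices $(0,0)$ and $(1,-(d-2)m)$ when $m=v_p(d-1)>0$, giving a root with $v_p(a)=(d-2)m$; conjugating by $z\mapsto\beta z$ with $\beta^{d-1}=-a(d-1)$ then makes every coefficient a $p$-adic unit. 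This single Newton-polygon step is the missing ingredient in your Case~2, and it avoids any resultant computations or case-by-case integrality checks. If you want to keep your two-case structure, you can simply import this construction for $d=p^n+1$; alternatively, since it also covers the easy case $p\nmid(d-1)$ (there the polygon is flat and $a$ is a unit), it subsumes your Case~1 as well.
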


\begin{proof}
Let $p$ be prime and consider the following polynomial of degree $d$:
\[ f(z) = a(-(d-1)z^d+dz^{d-1})+\frac{d}{d-1}.\]

This is modified from the conservative Belyi polynomial $B_{d,1}$. Like $B_{d,1}$, it has just two finite critical points at $0$ and $1$. The first critical point, 0, is preperiodic, as $f(0) = \frac{d}{d-1}$, which is a fixed point. The second critical point $1$ maps to $a + \frac{d}{d-1}$. We may choose $a$ such that $a+\frac{d}{d-1}$ is also a fixed point for $f$, thus making $f$ PCF. We claim that there is one such choice of $a$ such that the resulting PCF polynomial has potential good reduction at $p$.

The $a$ values that result in the critical orbit described above are the roots of the following polynomial in $a$:
\[ (d-1)a\left(a+\frac{d}{d-1}\right)^{d-1}+1.\]
Looking at the Newton polygon for the polynomial above at $p$, we see that if $p \nmid (d-1)$, then $a$ is a $p$-adic unit and $f$ already has good reduction at $p$. If instead $v_p(d-1) = m>0$, then the Newton polygon has a line segment connecting $(0,0)$ to its lowest point at $(1, -(d-2)m)$. This implies that there exists one root $a$ of this polynomial such that $v_p(a) = (d-2)m$, and all other roots have negative $p$-adic valuation.

If we choose the $a$ value described above such that $v_p(a) = (d-2)m$, then $f$ will have potential good reduction at $p$. We may see this by conjugating $f$ by a scaling map to make it monic. Let $\beta$ be such that $\beta^{d-1} = -a(d-1)$, the leading coefficient of $f$. Then let $\phi(z) = \frac{z}{\beta}$ and define the conjugate map $f^\phi(z) = \phi^{-1} \circ f \circ \phi (z)$. We then have
\[ f^\phi(z) = z^d - \frac{d}{d-1} \beta z^{d-1} + \frac{d}{d-1}\beta.\]
Computing the $p$-adic valuation of $\frac{d}{d-1}\beta$, we see that the coefficients of $f^\phi$ are all $p$-adic units, and thus $f^\phi$ is a PCF polynomial of degree $d$ with good reduction at $p$.
\end{proof}

In particular, when $d-1$ is a prime power, we have the following corollary:

\begin{corollary}
If $d=p^k+1$ for some prime $p$ and some integer $k \geq 1$, then there exists a bicritical PCF polynomial of degree $d$ with potential good reduction everywhere.
\end{corollary}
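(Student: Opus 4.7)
The plan is to exhibit a single value of $a$ for which the polynomial
\[
f(z) = a\bigl(-(d-1)z^d + dz^{d-1}\bigr) + \tfrac{d}{d-1}
\]
from Proposition~\ref{prop:goodred} simultaneously has potential good reduction at every prime. The key arithmetic input is that $d - 1 = p^k$ is a prime power, so $p$ is the only rational prime dividing $d - 1$; this reduces an everywhere-good-reduction statement to the single local condition already handled in Proposition~\ref{prop:goodred}.

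First I would recall from the proof of Proposition~\ref{prop:goodred} that the admissible values of $a$ (those making $f$ into a bicritical PCF polynomial with the described critical portrait) are precisely the roots of
\[
Q(a) := (d-1)\,a\left(a + \tfrac{d}{d-1}\right)^{d-1} + 1.
\]
Next, for any prime $q \neq p$, I would examine the $q$-adic Newton polygon of $Q$. Since $q \nmid d - 1$, the element $d-1$ is a $q$-adic unit; after expansion, the denominators of all coefficients of $Q$ are powers of $d - 1$, hence $q$-adic units. Both the leading coefficient $(d-1)$ and the constant term $1$ are $q$-adic units, so the Newton polygon is the flat segment from $(0,0)$ to $(d,0)$ and every root of $Q$ is a $q$-adic unit. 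By the argument of Proposition~\ref{prop:goodred}, any such choice of $a$ forces $f$ to have good reduction at $q$.

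Finally, at the prime $p$ itself, I would select the specific root of $Q$ identified in the proof of Proposition~\ref{prop:goodred}: the unique $a$ with $v_p(a) = (d-2)k$, which yields potential good reduction at $p$. Since the preceding paragraph shows that this same $a$ gives good reduction at every other prime, the resulting $f$ is a bicritical PCF polynomial of degree $d$ with potential good reduction everywhere. The step requiring the most care is the Newton polygon verification for $q \neq p$---specifically, confirming that all intermediate coefficients of $Q$ have non-negative $q$-adic valuation. This is exactly where the prime-power hypothesis $d - 1 = p^k$ does the work, by ensuring $d-1$ is a unit at every prime other than $p$.
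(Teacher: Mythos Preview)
Your proposal is correct and matches the paper's intended argument: the corollary is stated in the paper as an immediate consequence of Proposition~\ref{prop:goodred}, and your argument simply makes explicit the observation that when $d-1=p^k$ the unique ``bad'' prime for the Newton polygon of $Q$ is $p$, so the special root with $v_p(a)=(d-2)k$ is automatically a $q$-adic unit for every $q\neq p$. The verification that the intermediate coefficients of $Q$ are $q$-integral (since all denominators are powers of $d-1$) is exactly the point, and you have it right.
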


\begin{proof}
Let $d=p^k+1$ and define $f(z)$ as in the proof of Proposition~\ref{prop:goodred}, with the coefficient $a$ chosen so that $f$ has potential good reduction at $p$. Note that as defined, $f$ has good reduction at $q$ for any prime $q$ not dividing $d-1$. Since $d-1=p^k$ in this case, $f$ has good reduction at all primes $q \neq p$, and thus has potential good reduction everywhere.
\end{proof}

In the degree 3 case, the example given in Proposition~\ref{prop:goodred} is conjugate to the Chebyshev polynomial $T_3(x) = 4x^3-3x$. Since Chebyshev polynomials could be a special case that we'd like to exclude just as we excluded compositions of unicritical polynomials, we note that none of our higher-degree examples are Chebyshev polynomials. We can see this because our examples are all bicritical, while the degree-$d$ Chebyshev polynomial has $d-1$ distinct critical points: Since the Chebyshev polynomial $T_d(x)$ satisfies $T_d(\cos \theta) = \cos(d\theta)$, we see that $T_d$ has $d$ roots and $d-1$ distinct critical points between $-1$ and $1$.

\section{Heights of Conservative Polynomials}\label{sec:heights}

Conservative dynamical Belyi polynomials can also be used to answer two questions posed by Silverman in~\cite{MdADS} about the heights of conservative polynomials. Recall that a conservative polynomial $f$ is a polynomial for which every critical point $\gamma_i$ satisfies $f(\gamma_i) = \gamma_i$.

\begin{definition}\label{def:height}
The (absolute projective) height of a degree-$d$ polynomial
\[ f(z) = c \prod_{i=1}^d (z-\alpha_i) \in \overline{\QQ}[z] \]
is given by
\[ h(f) = \frac1d \sum_{i=1}^d h(\alpha_i).\]
Note that if $f \in \QQ[z]$ is irreducible, then $h(f) = h(\alpha)$ for any root $\alpha$ of $f$.
\end{definition}

To compute the height of a polynomial, we can use the Mahler measure. (For a reference on heights and Mahler measure, see~\cite{heightreference}, particularly Proposition 1.6.6.)

\begin{definition}[Mahler measure]
If $f(z) = c \displaystyle\prod_{i=1}^d (z-\alpha_i)$, then 
\[M(f)=|c| \displaystyle\prod_{|\alpha_i| \geq 1} |\alpha_i|. \] 

We have the following relationship between height and Mahler measure for polynomials with integral coefficients of content 1:
 \begin{equation}\label{eq:mahler} h(f)=\frac{1}{d} \log M(f).\end{equation}
\end{definition}

We consider the following questions of Silverman:

\begin{question} [\protect{Question 6.55 in~\cite{MdADS}}]
\label{q1}
Define a polynomial $f$ to be normalized if it is monic and $f(0)=0$. Are the following statements   about the set $\mathcal{C}_d^{\text{poly}}$ of normalized conservative polynomials of degree $d$ in $\overline{\mathbb{Q}}[z]$ true?

\begin{equation}\label{eq:question1}
\lim_{d \to \infty} \max_{f \in \mathcal{C}_d^{\text{poly}}} h(f) = 0?
\end{equation}

\begin{equation}\label{eq:limitquestion}
\limsup_{d \to \infty} \max_{f \in \mathcal{C}_d^{\text{poly}}} \frac{h(f)}{(\log{d})/d} < \infty?
\end{equation}

\end{question}

Note that Silverman's definition of ``normalized polynomial'' differs from the definition used in the previous section. Our  dynamical Belyi maps $B_{d,k}$ are conservative and satisfy $B_{d,k}(0) = 0$, but they are not monic. To put them in the desired normal form, we will use scaled conjugates of these maps to provide negative answers to both parts of Question~\ref{q1}. The leading coefficient of $B_{d,k}$ is $a_k = (-1)^k \binom{d-1}{k}$, so as before we choose $\beta$ such that $\beta^{d-1} = a_k$ and conjugate $B_{d,k}$ by the scaling map $\phi(z) = \frac{z}{\beta} $ to obtain the polynomial $B_{d,k}^\phi$, just as in equation~\eqref{eq:BelyConj}. The polynomial $B_{d,k}^\phi$ is a monic conservative polynomial satisfying the hypotheses in Question~\ref{q1}.

We begin by bounding the difference between $h(B_{d,k})$ and $h(B^\phi_{d,k})$.

\begin{lemma}\label{lem:heightdiff} Let $B^\phi_{d,k}$ be defined as in equation~\eqref{eq:BelyConj}. Then
\[ \left| h(B_{d,k}) - h(B_{d,k}^\phi) \right| \leq \frac{k}{d(d-1)} \log{\left(\binom{d-1}{k}\right)}.\]
\end{lemma}

\begin{proof}
If $\{ \alpha_1, \alpha_2, \dots, \alpha_k\}$ are the nonzero roots of $B_{d,k}$, then $\{\alpha_1\beta, \alpha_2 \beta, \dots, \alpha_k \beta\}$ are the non-zero roots of $B^\phi_{d,k}$, and using the triangle inequality for heights we obtain
\begin{equation} h(B^\phi_{d,k}) = \frac1d \sum_{i=1}^k h(\alpha_i \beta) \geq \frac1d \sum_{i=1}^k (h(\alpha_i) - h(\beta)) = h(B_{d,k}) - \frac{k}{d}h(\beta),
\label{eq:triangleineq1}
\end{equation}
and similarly, 
\begin{equation} h(B^\phi_{d,k}) = \frac1d \sum_{i=1}^k h(\alpha_i \beta) \leq \frac1d \sum_{i=1}^k (h(\alpha_i) + h(\beta)) = h(B_{d,k}) + \frac{k}{d}h(\beta).
\label{eq:triangleineq2}
\end{equation}
Recall that $\beta^{d-1} = (-1)^k \binom{d-1}{k}$, and so $h(\beta) = \frac{1}{d-1} \log{\left(\binom{d-1}{k}\right)}$. Substituting this into the inequalities above gives the desired result.
\end{proof}

\begin{thm} \label{thm:bheight} Let $B^\phi_{d,k}$ be defined as in equation~\eqref{eq:BelyConj}. Then
\[ h(B_{d,k}^\phi) > \frac{1}{d} \log{\left( {d \choose k}\right)} \left(1-\frac{k}{d-1}\right).\]
\end{thm}

\begin{proof}
First we compute a lower bound for $h(B_{d,k})$ using the Mahler measure. Define the polynomial $F_{d,k}(z) = B_{d,k}(z)/z^{d-k}$. Since $B_{d,k}$ and $F_{d,k}$ have the same nonzero roots, we see from Definition~\ref{def:height} that 
\[ h(B_{d,k}) = \frac{k}{d} h(F_{d,k}). \]

We have normalized such that $F_{d,k}(1) = B_{d,k}(1) = 1$, so $F_{d,k}(z) \in \mathbb{Z}[z]$ has content 1. Therefore, we can use the Mahler measure to compute the height of $F_{d,k}$. Using equation~\eqref{eq:mahler}, we see that
\begin{equation}
 h(B_{d,k}) = \frac{k}{d}\cdot  \frac{1}{k} \log{M(F_{d,k})} = \frac1d \log{M(F_{d,k})}.
 \label{eqn:fb-mahler}
 \end{equation}

 Let $\alpha_1, \alpha_2, \ldots ,\alpha_k$ denote the roots of $F_{d,k}$, listed with multiplicity. Note that $F$ contains at least one root outside the unit disk because the absolute value of the product of the roots is $|a_0/a_k| = \frac{d}{d-k} >1$.  
  If $\abs{\alpha_i} \geq 1$ for all $1 \leq i \leq k$ then $M(F_{d,k})$ is simply equal to the constant term of the polynomial $F_{d,k}$, namely $\binom{d}{k}$. Otherwise, this provides a lower bound for $M(F_{d,k})$, and so we have the following:

\[ h(B_{d,k})=  \frac1d \log{M(F_{d,k})} = \frac1d \log{\left(\frac{\binom{d}{k}}{\prod_{|\alpha_i|<1} |\alpha_i|} \right)}  \geq \frac1d  \log{\left( {d \choose k}\right)} .\]

Next, we use Lemma~\ref{lem:heightdiff} to obtain a lower bound for the height of the monic conjugate $B^\phi_{d,k}$:

\[  h(B^\phi_{d,k}) \geq h(B_{d,k})- \frac{k}{d(d-1)} \log{\left(\binom{d-1}{k}\right)} \geq \frac1d  \log{\left( {d \choose k}\right)} - \frac{k}{d(d-1)}  \log{\left(\binom{d-1}{k}\right)} \]
\[> \frac{1}{d} \log{\left( {d \choose k}\right)} \left(1-\frac{k}{d-1}\right).\qedhere \]

\end{proof}

Taking $d=2k$ to maximize the value of the binomial coefficient, we get the following result, which provides a negative answer to the question posed in equation~\eqref{eq:question1}.

\begin{cor}\label{thm:height1} Let $d=2k$, where $k$ is a positive integer. Then,
\[ \liminf_{k \to \infty} h(B_{2k,k}^\phi) \geq \frac{1}{2} \log{2}.\]
\end{cor}

\begin{proof}
The inequality below follows from Theorem~\ref{thm:bheight}:
\[ h(B_{2k,k}^\phi) > \frac{1}{2k} \log{\left( {2k \choose k}\right)} \left(1-\frac{k}{2k-1}\right) = \frac{k-1}{2k(2k-1)} \log{\binom{2k}{k}} .\]

We use Stirling's approximation, which gives $\binom{2k}{k} \sim \frac{2^{2k}}{\sqrt{\pi k}}$, to compute the limit.
\[ \lim_{k \to \infty} \left( \frac{k-1}{2k(2k-1)} \log{\binom{2k}{k}} \right) =\lim_{k \to \infty}  \left(\frac{k-1}{2k(2k-1)} \log{\left(\frac{2^{2k}}{\sqrt{\pi k}}\right)}  \right)\]
\[ =\lim_{k \to \infty} \left(\frac{k-1}{2k-1} \log{2} -\frac{k-1}{4k(2k-1)}\log{(\pi k)} \right)=\frac{1}{2} \log{2} . \qedhere\]

\end{proof}

Corollary~\ref{thm:height1} provides a negative answer to Silverman's first question. This implies a negative answer to the second question as well, though it is interesting to investigate how the ratio of $h(B_{d,k}^\phi)$ to $\left(\log{d}\right)/d$ varies for different choices of $k$. Being a bit more precise in estimating $h(B_{d,k})$ than in the proof of Theorem~\ref{thm:bheight} , we get the result below.

\begin{thm} \label{thm:height} Fix an integer $k \geq 1$ and let $d \geq k+2$. Let $B^\phi_{d,k}$ be defined as in equation~\eqref{eq:BelyConj}. Then

 \[ \lim_{d \to \infty} \frac{h(B_{d,k}^\phi)}{\frac 1 d \log{d}} =k.\]

\end{thm}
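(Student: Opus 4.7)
The plan is to reduce the height of $B_{d,k}^\varphi$ to a Mahler-measure computation on the degree-$k$ non-trivial factor of $B_{d,k}$, and then use the fact that, after rescaling to be monic, this factor converges coefficient-wise to $(z-1)^k$ as $d\to\infty$ to control the archimedean contribution.

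I would first write $B_{d,k}(z) = z^{d-k}g(z)$ with $g(z) = \sum_{i=0}^{k}a_i z^i \in \ZZ[z]$, so that the roots of $B_{d,k}$ are $0$ with multiplicity $d-k$ together with the $k$ roots $\alpha_1,\dots,\alpha_k$ of $g$. The conjugation by $\varphi(z) = z/\beta$ in \eqref{eq:BelyConj} scales the nonzero roots to $\gamma_j := \beta\alpha_j$ and fixes the zero root, and because $B_{d,k}^\varphi$ is monic, Definition~\ref{def:height} gives
\[ h(B_{d,k}^\varphi) \;=\; \frac{1}{d}\sum_{j=1}^k h(\gamma_j). \]
The standard inequality $|h(\beta\alpha) - h(\alpha)|\le h(\beta)$, together with $\beta^{d-1}=a_k\in\ZZ$ and $|a_k|=\binom{d-1}{k}$, yields $h(\beta)=\log\binom{d-1}{k}/(d-1) = O((\log d)/d)$ for fixed $k$, and hence $\sum_j h(\gamma_j) = \sum_j h(\alpha_j) + O((\log d)/d)$.

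The next step is to apply the Mahler measure formula \eqref{eq:mahler} to $g$. Since $g(1) = B_{d,k}(1) = 1$, the content of $g$ is $1$, so
\[ \sum_{j=1}^k h(\alpha_j) \;=\; \log M(g) \;=\; \log|a_k| + \sum_{j=1}^k \log\max(1,|\alpha_j|_\infty). \]
The first term equals $\log\binom{d-1}{k} = k\log d + O(1)$ as $d \to \infty$ with $k$ fixed. For the archimedean contribution, I would observe that the monic polynomial $g(z)/a_k$ has coefficients $(-1)^{i-k}\binom{k}{i}\cdot d/(d-k+i)$, which converge term-by-term to the coefficients of $(z-1)^k$. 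By continuity of roots in coefficients, each $\alpha_j \to 1$ in $\CC$, so the finite sum $\sum_{j=1}^k \log\max(1,|\alpha_j|_\infty)$ tends to $0$. Combining everything gives $\sum_j h(\gamma_j) = k\log d + O(1)$ and therefore $h(B_{d,k}^\varphi)\,/\,((\log d)/d) \to k$.

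The hardest part will be the archimedean estimate $\alpha_j \to 1$: without it the tails $\log\max(1,|\alpha_j|_\infty)$ could in principle grow with $d$ and spoil the main term $k\log d$ coming from $\log\binom{d-1}{k}$. Fortunately only qualitative convergence is required, and it follows from a standard root-continuity argument applied to the coefficient-wise limit $g/a_k \to (z-1)^k$. Everything else is bookkeeping with the height inequality $|h(\beta\alpha)-h(\alpha)|\le h(\beta)$ and the Mahler measure identity for integer polynomials of content $1$.
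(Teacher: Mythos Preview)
Your argument is correct and follows the same overall architecture as the paper's: factor out $z^{d-k}$, use the Mahler-measure identity \eqref{eq:mahler} on the degree-$k$ factor $g=F_{d,k}$ (which has content $1$ since $g(1)=1$) to isolate the dominant term $\log|a_k|=\log\binom{d-1}{k}\sim k\log d$, control the remaining archimedean piece, and finally pass from $B_{d,k}$ to its monic conjugate $B_{d,k}^\varphi$.

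The two substantive differences are in the auxiliary estimates. For the archimedean remainder you show $g/a_k\to (z-1)^k$ coefficient-wise and invoke continuity of roots to get $\alpha_j\to 1$, so $\sum_j\log\max(1,|\alpha_j|)\to 0$. The paper instead applies a Cauchy bound to the reciprocal polynomial $z^kF_{d,k}(1/z)$ to obtain $|\alpha_j|\ge 2^{-k}$, giving a uniform $O(1)$ bound on $\sum_{|\alpha_j|<1}\log|\alpha_j|$; either suffices. For the comparison with the conjugate, you use the standard inequality $|h(\beta\alpha)-h(\alpha)|\le h(\beta)$ together with $h(\beta)=\tfrac{1}{d-1}\log\binom{d-1}{k}$, which is markedly quicker than the paper's Lemma~\ref{thm:monicheight}, where the content of $cB_{d,k}^\varphi$ is pinned down explicitly (via $\beta^{d-k-1}\mid c\mid \beta^{d-2}$) before computing with the Mahler measure. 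The paper's route yields the explicit bound $|h(B_{d,k})-h(B_{d,k}^\varphi)|<\tfrac{k-1}{d}\log 2$, but for the limit in question your one-line height inequality is all that is needed.
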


To prove this, we start by showing that this limit holds for $B_{d,k}$ in place of $B_{d,k}^\phi$. Then we will obtain the result for $B_{d,k}^\phi$ using Lemma~\ref{lem:heightdiff}.

\begin{thm}\label{thm:height2}
Fix an integer $k \geq 1$ and let $d \geq k+2$. Then,
 \[ \lim_{d \to \infty} \frac{h(B_{d,k})}{\frac 1 d \log{\left( {d \choose k}\right)}} =1.\]
 \end{thm}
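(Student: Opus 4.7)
The plan is to reduce the height computation to a Mahler-measure estimate and then sandwich $M(B_{d,k})$ tightly between two explicit multiples of $\binom{d}{k}$.

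Since $B_{d,k}\in\ZZ[z]$ has content $1$ (as observed in the proof of Proposition~\ref{prop:monicconj}, because $B_{d,k}(1)=1$ precludes any common prime factor of the coefficients), equation~\eqref{eq:mahler} gives $h(B_{d,k})=\frac{1}{d}\log M(B_{d,k})$. I would first factor $B_{d,k}(z) = z^{d-k}g(z)$, where $g(z)=\sum_{i=0}^k a_i z^i$ has degree $k$. Since Mahler measure is multiplicative and $M(z^{d-k})=1$, we have $M(B_{d,k})=M(g)$. A direct calculation from Definition~\ref{def:belyi} shows that $a_0 = \binom{d}{k}$ and $a_k = (-1)^k\binom{d-1}{k}$; thus the constant term of $g$ is already of size $\binom{d}{k}$.

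For the lower bound, I would invoke the classical coefficient inequality $|a_j|\leq \binom{k}{j}M(g)$, which follows from writing $g(z)=a_k\prod_{i=1}^k(z-\alpha_i)$ and expressing $a_j$ via elementary symmetric polynomials of the roots. Taking $j=0$ gives $M(g)\geq |a_0|=\binom{d}{k}$. For the upper bound, I would use $M(g)\leq \sum_{i=0}^k |a_i|$. Algebraic rearrangement of the explicit formula yields $|a_i|=(d-k)\binom{d}{k}\binom{k}{i}/(d-k+i)$, and the identity
\[ \sum_{i=0}^k \frac{\binom{k}{i}}{d-k+i}=\int_0^1 x^{d-k-1}(1+x)^k\,dx \]
combined with the crude bound $(1+x)^k\leq 2^k$ on $[0,1]$ yields $\sum_i|a_i|\leq 2^k\binom{d}{k}$, and hence $M(g)\leq 2^k\binom{d}{k}$.

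Combining these two estimates gives
\[ \log\binom{d}{k}\leq \log M(B_{d,k})\leq \log\binom{d}{k}+k\log 2, \]
and dividing by $\log\binom{d}{k}\sim k\log d$ (which tends to infinity as $d\to\infty$ with $k$ fixed) shows the ratio in the statement tends to $1$. I expect the technical crux to be the closed-form upper bound on $\sum|a_i|$: isolating $\binom{d}{k}$ as a common factor and recognizing the residual sum as a Beta-type integral is what produces an upper bound with the same leading behavior as the lower bound. The lower bound $|a_0|\leq M(g)$ is the conceptual core, identifying $\binom{d}{k}$ as the correct normalizing quantity, while the upper-bound constant $2^k$ depends only on the fixed integer $k$ and is therefore negligible compared to $\log\binom{d}{k}$ in the limit.
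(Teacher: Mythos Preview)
Your argument is correct and follows the same overall architecture as the paper's proof: both factor $B_{d,k}(z)=z^{d-k}g(z)$, reduce to the Mahler measure of the degree-$k$ factor via content~$1$, and obtain the lower bound $M(g)\geq |a_0|=\binom{d}{k}$ from the product-of-roots relation. The only substantive difference is in the upper bound. The paper bounds the small roots from below via a Cauchy-type bound on the reciprocal polynomial $z^kF_{d,k}(1/z)$, giving $|\alpha_i|\geq 2^{-k}$ and hence $M(g)\leq 2^{k^2}\binom{d}{k}$; you instead use the coefficient inequality $M(g)\leq \sum_i|a_i|$ and evaluate $\sum_i|a_i|$ exactly via the Beta-integral identity, obtaining the sharper $M(g)\leq 2^k\binom{d}{k}$. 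Your route avoids any discussion of root locations and gives a tighter constant, at the cost of the explicit combinatorial manipulation; the paper's route is more geometric but yields a looser constant. Either bound depends only on $k$ and is therefore negligible against $\log\binom{d}{k}$, so both suffice for the limit.
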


\begin{proof}

Define  $F_{d,k}(z) = B_{d,k}(z)/z^{d-k}$ as in the proof of Theorem~\ref{thm:bheight}. There, we saw that
\[ h(B_{d,k})=  \frac1d \log{M(F_{d,k})} = \frac1d \log{\left(\frac{\binom{d}{k}}{\prod_{|\alpha_i|<1} |\alpha_i|} \right)} , \]
where $\alpha_i$ are the roots of $F_{d,k}$.
 

Since
\[ h(F_{d,k}) = \frac{1}{k} \log{\left(\frac{\binom{d}{k}}{\prod_{|\alpha_i|<1} |\alpha_i|} \right)} ,\]
it follows that
\begin{equation}
\frac{h(B_{d,k})}{\frac 1 d \log{\left( {d \choose k}\right)}} = \frac{k \cdot h(F_{d,k})}{\log{\left( {d \choose k}\right)}} =  \frac{\log{\left(\frac{\binom{d}{k}}{\prod_{|\alpha_i|<1} |\alpha_i|} \right)}}{\log{\left( {d \choose k}\right)}} = 1-\frac{\log{\left(\prod_{|\alpha_i|<1} |\alpha_i| \right)}}{\log{\left( {d \choose k}\right)}}.
\label{eqn:bf-height}
\end{equation}

 Using a Cauchy bound, if $\alpha$ is a root of the reciprocal polynomial $z^kF_{d,k}\left(\frac{1}{z}\right)$, then
\[
\abs{\alpha}\leq 1+\max\limits_{1\leq i \leq k} \left\lbrace \abs{\frac{a_i}{a_0}} \right\rbrace \leq 2^k+1<2^{k+1}.
\]
Therefore,  the roots of $F_{d,k}$ are bounded away from zero; specifically, $\abs{\alpha_i} \geq 2^{-(k+1)}$. 
We calculate:
\begin{equation*} 
0 \geq \lim_{d \to \infty} \frac{\log\prod\limits_{\abs{\alpha_i}<1} \abs{\alpha_i}}{\log\left({d\choose k}\right)} \geq \lim\limits_{d\rightarrow \infty} \frac{\log 2^{-k^2-k}}{\log \left( {d\choose k}\right)} = 0.
\end{equation*}

Combining this with equation~\eqref{eqn:bf-height} gives the desired result.
\end{proof}

\begin{cor}\label{cor:heightlimit} Fix an integer $k \geq 1$. Then the following limit holds:
\[ \lim_{d \to \infty} \frac{h(B_{d,k})}{(\log{d})/d} = k.\]
\end{cor}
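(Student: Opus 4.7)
The plan is to deduce Corollary~\ref{cor:heightlimit} directly from Theorem~\ref{thm:height2} by comparing the two normalizations $\frac{1}{d}\log\binom{d}{k}$ and $(\log d)/d$. Since $k$ is fixed while $d\to\infty$, the difference between these two quantities is controlled by the elementary asymptotic behavior of $\log\binom{d}{k}$.

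First, I would observe that for fixed $k$, the binomial coefficient
\[
\binom{d}{k} = \frac{d(d-1)(d-2)\cdots(d-k+1)}{k!}
\]
is a polynomial in $d$ of degree $k$ with leading coefficient $1/k!$. Consequently $\binom{d}{k}\sim d^k/k!$ as $d\to\infty$, which gives
\[
\log\binom{d}{k} = k\log d - \log(k!) + o(1).
\]
Dividing by $\log d$ yields $\lim_{d\to\infty}\frac{\log\binom{d}{k}}{\log d}=k$.

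Next, I would factor the ratio in the corollary as
\[
\frac{h(B_{d,k})}{(\log d)/d} \;=\; \frac{h(B_{d,k})}{\tfrac{1}{d}\log\binom{d}{k}} \cdot \frac{\log\binom{d}{k}}{\log d}.
\]
By Theorem~\ref{thm:height2} the first factor tends to $1$, and by the computation above the second factor tends to $k$. Taking the limit of the product gives the desired equality
\[
\lim_{d\to\infty} \frac{h(B_{d,k})}{(\log d)/d} = 1\cdot k = k.
\]

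There is no genuine obstacle here; the statement is a clean corollary, and the only ingredient beyond Theorem~\ref{thm:height2} is the routine asymptotic $\log\binom{d}{k} = k\log d + O(1)$ for fixed $k$. The only thing to be careful about is that $k$ must remain fixed as $d\to\infty$ for this asymptotic to hold, which is precisely the hypothesis of the corollary.
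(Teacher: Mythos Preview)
Your proposal is correct and follows essentially the same approach as the paper: factor the ratio as $\dfrac{h(B_{d,k})}{\frac{1}{d}\log\binom{d}{k}}\cdot\dfrac{\log\binom{d}{k}}{\log d}$, invoke Theorem~\ref{thm:height2} for the first factor, and use the elementary asymptotic $\log\binom{d}{k}\sim k\log d$ for the second. The paper's version is slightly terser (it simply asserts $\lim_{d\to\infty}\frac{\log\binom{d}{k}}{\log d}=k$ without the intermediate justification you supply), but the arguments are the same.
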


\begin{proof} From Theorem~\ref{thm:height2}, we have
 \[ \lim_{d \to \infty} \frac{h(B_{d,k})}{\frac 1 d \log{\left( {d \choose k}\right)}} =1.\]
 So, 
  \[ 
  \lim_{d \to \infty} \frac{h(B_{d,k})}{(\log{d})/d} = \lim_{d\to \infty} \frac{h(B_{d,k})}{\frac 1 d \log{\left( {d \choose k}\right)}}  \cdot \frac{\frac 1 d \log{\left( {d \choose k}\right)}}{(\log{d})/d} = \lim_{d \to \infty} \frac{\log{\left( {d \choose k}\right)}}{\log{d}} = k.\qedhere
  \]
\end{proof}

We are now able to prove Theorem~\ref{thm:height} by combining the results in Corollary~\ref{cor:heightlimit} and Lemma~\ref{lem:heightdiff}.

\begin{proof}[Proof of Theorem \ref{thm:height}]

Using Lemma~\ref{lem:heightdiff} we get
\[ \lim_{d \to \infty} \frac{h(B_{d,k})-\frac{k \log{\binom{d-1}{k}}}{d(d-1)}}{\log d/d}  \leq \lim_{d\to\infty} \frac{h(B_{d,k}^\phi)}{\log d /d} \leq \lim_{d \to \infty} \frac{h(B_{d,k})+\frac{k \log{\binom{d-1}{k}}}{d(d-1)}}{\log d/d}. \]

Noting that $\lim_{d \to \infty} \frac{\frac{k \log{\binom{d-1}{k}}}{d(d-1)}}{\log{d}/d} = 0$, we simplify and use Corollary~\ref{cor:heightlimit} to see that we have

\[ \lim_{d \to \infty} \frac{h(B^\phi_{d,k})}{\log d/d}  = \lim_{d\to\infty} \frac{h(B_{d,k})}{\log d /d} = k.\qedhere \]

\end{proof}

\subsection*{Acknowledgements}
The project was begun during a SQuaRE at the American Institute of Mathematics. The authors thank AIM for providing a supportive environment. The authors also thank the anonymous referees for their careful reading and helpful comments which improved the paper enormously.

This material is based upon work supported by and while the second author served at the National Science Foundation. Any
opinion, findings, and conclusions or recommendations expressed in this material are those of the authors
and do not necessarily reflect the views of the National Science Foundation.
The second author's work partially supported by Simons Foundation collaboration grant \#359721. The third author is partially supported by an AMS-Simons travel grant.

\subsection*{Statements and Declarations}
The authors have no conflicts of interest to declare that are relevant to the content of this article.

\bibliographystyle{plain}
\bibliography{belyipolys}

\end{document}